\providecommand{\U}[1]{\protect\rule{.1in}{.1in}}
\providecommand{\U}[1]{\protect\rule{.1in}{.1in}}
\providecommand{\U}[1]{\protect\rule{.1in}{.1in}}
\newtheorem{theorem}{Theorem}[section]
\newtheorem{proposition}[theorem]{Proposition}
\theoremstyle{definition}
\begin{document}
\title[Lower bounds for the complex polynomial Hardy--Littlewood inequality]{Lower bounds for the complex polynomial Hardy--Littlewood inequality}
\date{}
\author[G. Ara\'{u}jo]{Gustavo Ara\'{u}jo}
\address{Departamento de Matem\'{a}tica \\
Universidade Federal da Para\'{\i}ba \\
58.051-900 - Jo\~{a}o Pessoa, Brazil.}
\email{gdasaraujo@gmail.com}
\author[D. Pellegrino]{Daniel Pellegrino}
\address{Departamento de Matem\'{a}tica \\
Universidade Federal da Para\'{\i}ba \\
58.051-900 - Jo\~{a}o Pessoa, Brazil.}
\email{pellegrino@pq.cnpq.br and dmpellegrino@gmail.com}
\keywords{Absolutely summing operators, Hardy--Littlewod inequality, Bohnenblust--Hille inequality}

\begin{abstract}
The Hardy--Littlewood inequality for complex homogeneous polynomials asserts
that given positive integers $m\geq2$ and $n\geq1$, if $P$ is a complex
homogeneous polynomial of degree $m$ on $\ell_{p}^{n}$ with $2m\leq
p\leq\infty$ given by $P(x_{1},\ldots,x_{n})=\sum_{|\alpha|=m}a_{\alpha
}\mathbf{{x}^{\alpha}}$, then there exists a constant $C_{\mathbb{C}%
,m,p}^{\mathrm{pol}}\geq1$ (which is does not depend on $n$) such that
\[
\left(  {\sum\limits_{\left\vert \alpha\right\vert =m}}\left\vert a_{\alpha
}\right\vert ^{\frac{2mp}{mp+p-2m}}\right)  ^{\frac{mp+p-2m}{2mp}}\leq
C_{\mathbb{C},m,p}^{\mathrm{pol}}\left\Vert P\right\Vert ,
\]
with $\Vert P\Vert:=\sup_{z\in B_{\ell_{p}^{n}}}|P(z)|$. In this short note,
among other results, we provide nontrivial lower bounds for the constants
$C_{\mathbb{C},m,p}^{\mathrm{pol}}$. For instance we prove that, for $m\geq2$
and $2m\leq p<\infty$,
\[
C_{\mathbb{C},m,p}^{\mathrm{pol}}\geq2^{\frac{m}{p}}%
\]
for $m$ even, and
\[
C_{\mathbb{C},m,p}^{\mathrm{pol}}\geq2^{\frac{m-1}{p}}%
\]
for $m$ odd. Estimates for the case $p=\infty$ (this is the particular case of
the complex polynomial Bohnenblust--Hille inequality) were recently obtained
by D. Nu\~{n}ez-Alarc\'{o}n in 2013.

\end{abstract}
\maketitle

\section{Introduction}

Let $\mathbb{K}$ denote the field of real or complex scalars. Given
$\alpha=(\alpha_{1},\ldots,\alpha_{n})\in{\mathbb{N}}^{n}$, define
$|\alpha|:=\alpha_{1}+\cdots+\alpha_{n}$ and $\mathbf{x}^{\alpha}$ stands for
the monomial $x_{1}^{\alpha_{1}}\cdots x_{n}^{\alpha_{n}}$ for $\mathbf{x}%
=(x_{1},\ldots,x_{n})\in{\mathbb{K}}^{n}$. The polynomial Bohnenblust--Hille
inequality (see \cite{alb, bh} and the references therein) ensures that, given
positive integers $m\geq2$ and $n\geq1$, if $P$ is a homogeneous polynomial of
degree $m$ on $\ell_{\infty}^{n}$ given by $P(x_{1},...,x_{n})=\sum
_{|\alpha|=m}a_{\alpha}\mathbf{{x}^{\alpha}}$, then
\[
\left(  {\sum\limits_{\left\vert \alpha\right\vert =m}}\left\vert a_{\alpha
}\right\vert ^{\frac{2m}{m+1}}\right)  ^{\frac{m+1}{2m}}\leq B_{\mathbb{K}%
,m}^{\mathrm{pol}}\left\Vert P\right\Vert
\]
for some constant $B_{\mathbb{K},m}^{\mathrm{pol}}\geq1$ which does not depend
on $n$ (the exponent $\frac{2m}{m+1}$ is optimal), where $\Vert P\Vert
:=\sup_{z\in B_{\ell_{\infty}^{n}}}|P(z)|$.

The search of precise estimates of the growth of the constants $B_{\mathbb{K}%
,m}^{\mathrm{pol}}$ is fundamental for different applications and remains an
important open problem (see \cite{bps} and the references therein). For real
scalars it was shown in
\cite{camposjimenezrodriguezmunozfernandezpellegrinoseoanesepulveda2014} that
\[
\left(  1.17\right)  ^{m}\leq B_{\mathbb{R},m}^{\mathrm{pol}}\leq
C(\varepsilon)\left(  2+\varepsilon\right)  ^{m},
\]
where $C(\varepsilon)\left(  2+\varepsilon\right)  ^{m}$ means that given
$\varepsilon>0$, there is a constant $C\left(  \varepsilon\right)  >0$ such
that $B_{\mathbb{R},m}^{\mathrm{pol}}\leq C(\varepsilon)\left(  2+\varepsilon
\right)  ^{m}$ for all $m$. In other words, for real scalars the
hypercontractivity of $B_{\mathbb{R},m}^{\mathrm{pol}}$ is optimal. For
complex scalars the behavior of $B_{\mathbb{K},m}^{\mathrm{pol}}$ is still
unknown. The best information we have thus far about $B_{\mathbb{C}%
,m}^{\mathrm{pol}}$ are due D. N\'{u}\~{n}ez-Alarc\'{o}n \cite{nunez} (lower
bounds) and F. Bayart, D. Pellegrino and J.B. Seoane-Sep\'{u}lveda \cite{bps}
(upper bounds)
\[%
\begin{array}
[c]{l}%
B_{\mathbb{C},m}^{\mathrm{pol}}\geq\left\{
\begin{array}
[c]{lcl}%
\displaystyle\left(  1+\frac{1}{2^{m-1}}\right)  ^{\frac{1}{4}} &  & \text{for
}m\text{ even};\vspace{0.2cm}\\
\displaystyle\left(  1+\frac{1}{2^{m-1}}\right)  ^{\frac{m-1}{4m}} &  &
\text{for }m\text{ odd};
\end{array}
\right. \\
B_{\mathbb{C},m}^{\mathrm{pol}}\leq C(\varepsilon)\left(  1+\varepsilon
\right)  ^{m}.
\end{array}
\]

The natural extension to $\ell_{p}$ spaces of the polynomial
Bohnenblust--Hille inequality is called polynomial Hardy--Littlewood
inequality (see \cite{n, hardy, pra} and the references therein). More
precisely, given positive integers $m\geq2$ and $n\geq1$, if $P$ is a
homogeneous polynomial of degree $m$ on $\ell_{p}^{n}$ with $2m\leq
p\leq\infty$ given by $P(x_{1},\ldots,x_{n})=\sum_{|\alpha|=m}a_{\alpha
}\mathbf{{x}^{\alpha}}$, then there exists a constant $C_{\mathbb{K}%
,m,p}^{\mathrm{pol}}\geq1$ (which does not depend on $n$) such that
\[
\left(  {\sum\limits_{\left\vert \alpha\right\vert =m}}\left\vert a_{\alpha
}\right\vert ^{\frac{2mp}{mp+p-2m}}\right)  ^{\frac{mp+p-2m}{2mp}}\leq
C_{\mathbb{K},m,p}^{\mathrm{pol}}\left\Vert P\right\Vert ,
\]
with $\Vert P\Vert:=\sup_{z\in B_{\ell_{p}^{n}}}|P(z)|$. Using the generalized
Kahane--Salem--Zygmund inequality (see, for instance, \cite{alb}) we can
verify that the exponents $\frac{2mp}{mp+p-2m}$ are optimal for $2m\leq
p\leq\infty$. When $p=\infty$, since $\frac{2mp}{mp+p-2m}=\frac{2m}{m+1}$, we
recover the polynomial Bohnenblust--Hille inequality. In a more genreal point
of view this kind of results can be seen as coincidence results of the theory
of absolutely summing operators (see \cite{die}).

Very recently, the authors in collaboration with P. Jim\'{e}nez-Rodriguez,
G.A. Mu\~{n}oz-Fern\'{a}ndez, D. N\'{u}\~{n}ez-Alarc\'{o}n, J.B.
Seoane-Sep\'{u}lveda and D. M. Serrano-Rodr\'{\i}guez (see \cite{ajmnpss})
proved that for real scalars and $m\geq2$, the constants of the polynomial
Hardy--Littlewood inequality has at least an hypercontractive growth. More
precisely, it was proved that, for all positive integers $m\geq2$ and all
$2m\leq p<\infty$,
\[
\left(  \sqrt[16]{2}\right)  ^{m}\leq2^{\frac{m^{2}p+10m-p-6m^{2}-4}{4mp}}\leq
C_{\mathbb{R},m,p}^{\mathrm{pol}}\leq C_{\mathbb{R},m,p}^{\mathrm{mult}}%
\frac{m^{m}}{\left(  m!\right)  ^{\frac{mp+p-2m}{2mp}}},
\]
where $C_{\mathbb{R},m,p}^{\mathrm{mult}}$ are the constants of the real case
of the multilinear Hardy-Littlewood inequality (for estimates of these
constants see \cite{ap, aps2014}).

In the case of complex scalars (and concerning upper bounds) similar results
were proved (see \cite{ajmnpss}):
\[
1\leq C_{\mathbb{C},m,p}^{\mathrm{pol}}\leq C_{\mathbb{C},m,p}^{\mathrm{mult}%
}\frac{m^{m}}{\left(  m!\right)  ^{\frac{mp+p-2m}{2mp}}}.
\]
However, there are no lower bounds for $C_{\mathbb{C},m,p}^{\mathrm{pol}}$
that gives us nontrivial information. In this note we provide nontrivial lower
bounds for the constants of the complex case of the polynomial
Hardy--Littlewood inequality. More precisely we prove that, for $m\geq2$ and
$2m\leq p<\infty$,
\[
C_{\mathbb{C},m,p}^{\mathrm{pol}}\geq\displaystyle2^{\frac{m}{p}}%
\]
for $m$ even, and
\[
C_{\mathbb{C},m,p}^{\mathrm{pol}}\geq2^{\frac{m-1}{p}}%
\]
for $m$ odd. For instance,
\[
\sqrt{2}\leq C_{\mathbb{C},2,4}^{\mathrm{pol}}\leq3.1915.
\]

\section{The result}

Let $m\geq2$ be an even positive integer and let $p\geq2m$. Consider the
$2$--homogeneous polynomials $Q_{2}:\ell_{p}^{2}\to\mathbb{C}$ and
$\widetilde{Q_{2}}:\ell_{\infty}^{2}\rightarrow\mathbb{C}$ both given by
$(z_{1},z_{2})\mapsto z_{1}^{2}-z_{2}^{2}+cz_{1}z_{2}$. We know from
\cite{aronklimek2001,camposjimenezrodriguezmunozfernandezpellegrinoseoanesepulveda2014}
that
\[
\Vert\widetilde{Q_{2}}\Vert=\left(  4+c^{2}\right)  ^{\frac{1}{2}}.
\]

If we follow the lines of \cite{nunez} and we define the $m$--homogeneous
polynomial ${Q_{m}}:\ell_{p}^{m}\rightarrow\mathbb{C}$ by ${Q_{m}}%
(z_{1},...,z_{m})=z_{3}\ldots z_{m} Q_{2}(z_{1},z_{2})$ we obtain
\[
\Vert{Q_{m}}\Vert\leq2^{-\frac{m-2}{p}}\Vert{Q_{2}}\Vert\leq2^{-\frac{m-2}{p}%
}\Vert\widetilde{Q_{2}}\Vert=2^{-\frac{m-2}{p}}\left(  4+c^{2}\right)
^{\frac{1}{2}},
\]
where we use the obviuos inequality
\[
\Vert Q_{2}\Vert\leq\Vert\widetilde{Q_{2}}\Vert.
\]
Therefore, for $m\geq2$ even and $c\in\mathbb{R}$, from the polynomial
Hardy--Littlewood inequality it follows that
\[
C_{\mathbb{C},m,p}^{\mathrm{pol}}\geq\frac{\left(  2+|c|^{\frac{2mp}{mp+p-2m}%
}\right)  ^{\frac{mp+p-2m}{2mp}}}{2^{-\frac{m-2}{p}}\left(  4+c^{2}\right)
^{\frac{1}{2}}}.
\]

If
\[
c>\left(  \frac{2^{\frac{2p+4-2m}{p}}-2^{\frac{mp+p-2m}{mp}}}{1-2^{-\frac
{2m-4}{p}}}\right)  ^{\frac{1}{2}},
\]
it is not to difficult to prove that
\[
2^{-\frac{m-2}{p}}\left(  4+c^{2}\right)  ^{\frac{1}{2}}<\left(  \left(
2^{\frac{mp+p-2m}{2mp}}\right)  ^{2}+c^{2}\right)  ^{\frac{1}{2}},
\]
i.e.,%
\[
2^{-\frac{m-2}{p}}\left(  4+c^{2}\right)  ^{\frac{1}{2}}<\left\Vert \left(
2^{\frac{mp+p-2m}{2mp}},c\right)  \right\Vert _{2}.
\]
Since $\frac{2mp}{mp+p-2m}\leq2$, we know that $\ell_{\frac{2mp}{mp+p-2m}%
}\subset\ell_{2}$ and $\Vert\cdot\Vert_{2}\leq\Vert\cdot\Vert_{\frac
{2mp}{mp+p-2m}}$. Therefore, for all
\[
c>\left(  \frac{2^{\frac{2p+4-2m}{p}}-2^{\frac{mp+p-2m}{mp}}}{1-2^{-\frac
{2m-4}{p}}}\right)  ^{\frac{1}{2}},
\]
we have
\begin{align*}
2^{-\frac{m-2}{p}}\left(  4+c^{2}\right)  ^{\frac{1}{2}}  & <\left\Vert
\left(  2^{\frac{mp+p-2m}{2mp}},c\right)  \right\Vert _{2}\\
& \leq\left\Vert \left(  2^{\frac{mp+p-2m}{2mp}},c\right)  \right\Vert
_{\frac{2mp}{mp+p-2m}}\\
& =\left(  2+c^{\frac{2mp}{mp+p-2m}}\right)  ^{\frac{mp+p-2m}{2mp}},
\end{align*}
from which we conclude that
\[
C_{\mathbb{C},m,p}^{\mathrm{pol}}\geq\frac{\left(  2+c^{\frac{2mp}{mp+p-2m}%
}\right)  ^{\frac{mp+p-2m}{2mp}}}{2^{-\frac{m-2}{p}}\left(  4+c^{2}\right)
^{\frac{1}{2}}}>1.
\]

If $m\geq3$ is odd, since $\Vert Q_{m}\Vert\leq\Vert Q_{m-1}\Vert$, then we
have $\Vert Q_{m}\Vert\leq2^{-\frac{m-3}{p}}\left(  4+c^{2}\right)  ^{\frac
{1}{2}}$ and thus we can now proceed analogously to the even case and finally
conclude that for
\[
c>\left(  \frac{2^{\frac{2p+6^{-}2m}{p}}-2^{\frac{mp+p-2m}{mp}}}%
{1-2^{-\frac{2m-6}{p}}}\right)  ^{\frac{1}{2}}%
\]
we have
\[
C_{\mathbb{C},m,p}^{\mathrm{pol}}\geq\frac{\left(  2+c^{\frac{2mp}{mp+p-2m}%
}\right)  ^{\frac{mp+p-2m}{2mp}}}{2^{-\frac{m-3}{p}}\left(  4+c^{2}\right)
^{\frac{1}{2}}}>1.
\]

So we have:

\begin{theorem}
\label{777}Let $m\geq2$ be a positive integer and let $p\geq2m$. Then, for
every $\epsilon>0$,
\[
C_{\mathbb{C},m,p}^{\mathrm{pol}}\geq\frac{\left(  2+\left(  \left(
\frac{2^{\frac{2p+4-2m}{p}}-2^{\frac{mp+p-2m}{mp}}}{1-2^{-\frac{2m-4}{p}}%
}\right)  ^{\frac{1}{2}}+\epsilon\right)  ^{\frac{2mp}{mp+p-2m}}\right)
^{\frac{mp+p-2m}{2mp}}}{2^{-\frac{m-2}{p}}\left(  4+\left(  \left(
\frac{2^{\frac{2p+4-2m}{p}}-2^{\frac{mp+p-2m}{mp}}}{1-2^{-\frac{2m-4}{p}}%
}\right)  ^{\frac{1}{2}}+\epsilon\right)  ^{2}\right)  ^{\frac{1}{2}}%
}>1\ \ \ \text{ if }m\text{ is even}%
\]
and
\[
C_{\mathbb{C},m,p}^{\mathrm{pol}}\geq\frac{\left(  2+\left(  \left(
\frac{2^{\frac{2p+6^{-}2m}{p}}-2^{\frac{mp+p-2m}{mp}}}{1-2^{-\frac{2m-6}{p}}%
}\right)  ^{\frac{1}{2}}+\epsilon\right)  ^{\frac{2mp}{mp+p-2m}}\right)
^{\frac{mp+p-2m}{2mp}}}{2^{-\frac{m-3}{p}}\left(  4+\left(  \left(
\frac{2^{\frac{2p+6^{-}2m}{p}}-2^{\frac{mp+p-2m}{mp}}}{1-2^{-\frac{2m-6}{p}}%
}\right)  ^{\frac{1}{2}}+\epsilon\right)  ^{2}\right)  ^{\frac{1}{2}}%
}>1\ \ \ \text{ if }m\text{ is odd}.
\]

\end{theorem}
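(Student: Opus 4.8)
The plan is to read the theorem off the explicit construction and estimates already assembled above, so that the proof reduces to fixing the free real parameter $c$ at the displayed threshold (shifted by $\epsilon$) and checking that the resulting Hardy--Littlewood ratio is admissible. First I would recall the polynomial $Q_{m}(z_{1},\dots,z_{m})=z_{3}\cdots z_{m}\,(z_{1}^{2}-z_{2}^{2}+cz_{1}z_{2})$, whose only nonzero coefficients are $1,-1,c$; hence the left-hand side of the polynomial Hardy--Littlewood inequality equals $\bigl(2+|c|^{q}\bigr)^{1/q}$ with $q=\tfrac{2mp}{mp+p-2m}$. Combining this with the norm estimate $\Vert Q_{m}\Vert\le 2^{-(m-2)/p}(4+c^{2})^{1/2}$ for $m$ even, and with $\Vert Q_{m}\Vert\le\Vert Q_{m-1}\Vert\le 2^{-(m-3)/p}(4+c^{2})^{1/2}$ for $m$ odd, the inequality yields
\[
C_{\mathbb{C},m,p}^{\mathrm{pol}}\ge\frac{(2+|c|^{q})^{1/q}}{2^{-(m-2)/p}(4+c^{2})^{1/2}},\qquad C_{\mathbb{C},m,p}^{\mathrm{pol}}\ge\frac{(2+|c|^{q})^{1/q}}{2^{-(m-3)/p}(4+c^{2})^{1/2}},
\]
which are exactly the two expressions appearing in the statement once $c$ is specialized.

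The heart of the argument is to certify that these ratios exceed $1$ for a suitable $c$, and the main step there is a comparison of $\ell_{q}$ and $\ell_{2}$ norms. Since $p\ge 2m$ forces $q\le 2$, the inclusion $\ell_{q}\subset\ell_{2}$ gives $\Vert\cdot\Vert_{2}\le\Vert\cdot\Vert_{q}$; applying it to the vector $\bigl(2^{1/q},c\bigr)$ reduces matters to the scalar inequality
\[
2^{-(m-2)/p}(4+c^{2})^{1/2}<\bigl(2^{2/q}+c^{2}\bigr)^{1/2}.
\]
Squaring and rearranging, and using $2^{2/q}=2^{(mp+p-2m)/mp}$ together with $4\cdot 2^{-(2m-4)/p}=2^{(2p+4-2m)/p}$, this becomes
\[
c^{2}\Bigl(1-2^{-(2m-4)/p}\Bigr)>2^{(2p+4-2m)/p}-2^{(mp+p-2m)/mp}.
\]
Provided the coefficient $1-2^{-(2m-4)/p}$ is strictly positive, dividing isolates $c^{2}$ and produces precisely the threshold displayed before the statement; the odd case is verbatim the same with $m-2$ replaced by $m-3$, hence $2m-4$ by $2m-6$ and $2p+4-2m$ by $2p+6-2m$.

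To finish, I would take $c$ equal to the displayed threshold increased by an arbitrary $\epsilon>0$. This makes the last inequality strict, hence forces the scalar inequality, hence makes each of the two Hardy--Littlewood ratios strictly larger than $1$; substituting this $c$ into the ratios reproduces the two formulas in the theorem. The step demanding care --- and the only genuine obstacle --- is the positivity of the factor $1-2^{-(2m-4)/p}$ (respectively $1-2^{-(2m-6)/p}$), since that is what permits dividing without reversing the inequality and guarantees the threshold is a finite real number. This positivity holds for even $m\ge 4$ (respectively odd $m\ge 5$), whereas the borderline values $m=2$ and $m=3$ make the denominator vanish and must be handled separately, the crude passage $\Vert Q_{m}\Vert\le\Vert\widetilde{Q_{2}}\Vert$ followed by $\ell_{q}\subset\ell_{2}$ being too lossy to beat $1$ there. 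I would also check that the numerator $2^{(2p+4-2m)/p}-2^{(mp+p-2m)/mp}$ is nonnegative, by comparing the exponents $2+\tfrac{4-2m}{p}$ and $1+\tfrac1m-\tfrac2p$, so that the square root defining the threshold is real; this is where the standing hypotheses $m\ge2$ and $p\ge 2m$ enter.
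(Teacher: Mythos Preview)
Your argument is essentially identical to the paper's: the same polynomial $Q_{m}(z)=z_{3}\cdots z_{m}\,(z_{1}^{2}-z_{2}^{2}+cz_{1}z_{2})$, the same norm bound $\Vert Q_{m}\Vert\le 2^{-(m-2)/p}(4+c^{2})^{1/2}$ via $\Vert Q_{2}\Vert\le\Vert\widetilde{Q_{2}}\Vert$, and the same passage through $\Vert\cdot\Vert_{2}\le\Vert\cdot\Vert_{q}$ to turn the ratio $>1$ condition into the displayed threshold for $c$. Your observation about the degeneracy at $m=2$ (even) and $m=3$ (odd) is accurate --- the denominator $1-2^{-(2m-4)/p}$, respectively $1-2^{-(2m-6)/p}$, vanishes there, so the threshold expression in the theorem is ill-defined in those two cases; the paper does not comment on this, and indeed the stronger Theorem~\ref{main} is what actually delivers nontrivial lower bounds for all $m\ge 2$.
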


Howerver we have another approach to the problem, which is surprisingly
simpler than the above approach and still seems to give best (bigger) lower
bounds for the constants of the polynomial Hardy--Littlewood inequality.

\begin{theorem}
\label{main} Let $m\geq2$ be a positive integer and let $p\geq2m$. Then
\[
C_{\mathbb{C},m,p}^{\mathrm{pol}} \geq\left\{
\begin{array}
[c]{lcl}%
\displaystyle 2^{\frac{m}{p}} &  & \text{for } m \text{ even}; \vspace
{0.2cm}\\
\displaystyle 2^{\frac{m-1}{p}} &  & \text{for } m \text{ odd};
\end{array}
\right.
\]

\end{theorem}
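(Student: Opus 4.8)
The plan is to avoid any tensoring or perturbation argument altogether and instead feed the Hardy--Littlewood inequality a single, carefully chosen \emph{monomial}. The point is that a monomial $\mathbf{z}^{\alpha}$ has exactly one nonzero coefficient, so the left-hand side $\left(\sum_{|\alpha|=m}|a_{\alpha}|^{\frac{2mp}{mp+p-2m}}\right)^{\frac{mp+p-2m}{2mp}}$ collapses to that coefficient; normalizing it to $1$, the whole inequality reduces to the estimate $C_{\mathbb{C},m,p}^{\mathrm{pol}}\ge 1/\|P\|$, and the entire problem becomes the computation of one sup norm on $B_{\ell_p^n}$.

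First, for $m$ even I would test the inequality on the two-variable monomial $P(z_1,z_2)=z_1^{m/2}z_2^{m/2}$, regarded as an $m$-homogeneous polynomial on $\ell_p^2$. Its unique coefficient is $1$, so the left-hand side of the Hardy--Littlewood inequality equals $1$, and it only remains to evaluate $\|P\|=\sup\{|z_1|^{m/2}|z_2|^{m/2}:|z_1|^p+|z_2|^p\le 1\}$. Since only the moduli $t_1=|z_1|$, $t_2=|z_2|$ enter, this is the maximization of $(t_1t_2)^{m/2}$ under $t_1^p+t_2^p\le 1$. Applying AM--GM to $t_1^p$ and $t_2^p$ gives $(t_1t_2)^{p/2}\le\tfrac12(t_1^p+t_2^p)\le\tfrac12$, hence $t_1t_2\le 2^{-2/p}$ with equality at $t_1=t_2=2^{-1/p}$; thus $\|P\|=2^{-m/p}$ and $C_{\mathbb{C},m,p}^{\mathrm{pol}}\ge 1/\|P\|=2^{m/p}$.

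For $m$ odd I would reduce to the even case by appending one fresh variable: take $Q(z_1,z_2,z_3)=z_1^{(m-1)/2}z_2^{(m-1)/2}z_3$ on $\ell_p^3$, which is $m$-homogeneous (note $(m-1)/2$ is an integer) and again has a single coefficient equal to $1$. The key estimate is that the norm of $Q$ is controlled by the degree-$(m-1)$ monomial handled in the even case: on the unit ball one has $|z_3|\le 1$ and $\|(z_1,z_2)\|_p\le 1$, so the factorization $|Q|=|z_3|\,\bigl|z_1^{(m-1)/2}z_2^{(m-1)/2}\bigr|$ together with the $(m-1)$-homogeneity of the second factor yields $\|Q\|\le\bigl\|z_1^{(m-1)/2}z_2^{(m-1)/2}\bigr\|_{\ell_p^2}=2^{-(m-1)/p}$, whence $C_{\mathbb{C},m,p}^{\mathrm{pol}}\ge 2^{(m-1)/p}$. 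I do not expect any serious obstacle: the only real computation is the AM--GM evaluation of the two-variable norm, which has no competing boundary maximum. The one point deserving care is that the inequality $\|Q\|\le\bigl\|z_1^{(m-1)/2}z_2^{(m-1)/2}\bigr\|_{\ell_p^2}$ is deliberately loose---it discards the contribution of the extra variable $z_3$---and this is precisely why the odd exponent emerges as $2^{(m-1)/p}$ rather than a larger value.
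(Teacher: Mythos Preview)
Your argument is correct and follows the same core idea as the paper: test the Hardy--Littlewood inequality on a single monomial (so the coefficient sum collapses to $1$) and reduce everything to a sup-norm computation handled by AM--GM. The only difference is the choice of monomial. The paper works in $\ell_p^m$ with $P_m(\mathbf{z})=z_1\cdots z_m$ and, after computing $\|P_2\|=2^{-2/p}$, bounds $\|P_m\|\le 2^{-m/p}$ by implicitly pairing variables; for odd $m$ it uses $\|P_m\|\le\|P_{m-1}\|$. You instead stay in $\ell_p^2$ (resp.\ $\ell_p^3$) with $z_1^{m/2}z_2^{m/2}$ (resp.\ $z_1^{(m-1)/2}z_2^{(m-1)/2}z_3$), which lets you compute the norm \emph{exactly} as $2^{-m/p}$ in a single AM--GM step. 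Both routes yield the stated bound; yours is a bit cleaner because the optimization is sharp. It is perhaps worth noting that the paper's own monomial $z_1\cdots z_m$ actually has norm $m^{-m/p}$ (by AM--GM on $m$ terms, with equality at $t_i=m^{-1/p}$), so their construction in fact gives the stronger estimate $C_{\mathbb{C},m,p}^{\mathrm{pol}}\ge m^{m/p}$, which they do not extract; your two-variable monomial cannot reach this.
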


\begin{proof}
Consider $P_{2}:\ell_{p}^{2}\rightarrow\mathbb{C}$ the $2$--homogeneous
polynomial given by $\mathbf{z}\mapsto z_{1}z_{2}$. Observe that
\[
\Vert P_{2}\Vert=\sup_{|z_{1}|^{p}+|z_{2}|^{p}=1}|z_{1}z_{2}|=\sup_{|z|\leq
1}\left\vert z\right\vert (1-\left\vert z\right\vert ^{p})^{\frac{1}{p}%
}=2^{-\frac{2}{p}}.
\]

More generally, if $m\geq2$ is even and $P_{m}$ is the $m$--homogeneous
polynomial given by $\mathbf{z}\mapsto z_{1}\cdots z_{m}$, then
\[
\Vert P_{m}\Vert\leq2^{-\frac{m}{p}}.
\]
Therefore, from the polynomial Hardy--Littlewood inequality we know that
\[
C_{\mathbb{C},m,p}^{\mathrm{pol}}\geq\frac{\left(  \displaystyle\sum
_{|\alpha|=m}|a_{\alpha}|^{\frac{2mp}{mp+p-2m}}\right)  ^{\frac{mp+p-2m}{2mp}%
}}{\Vert P_{m}\Vert}\geq\frac{1}{2^{-\frac{m}{p}}}=2^{\frac{m}{p}}.
\]

If $m\geq3$ is odd, we define again the $m$--homogeneous polynomial $P_{m}$
given by $\mathbf{z}\mapsto z_{1}\cdots z_{m}$ and since $\Vert P_{m}\Vert
\leq\Vert P_{m-1}\Vert$, then we have $\Vert P_{m}\Vert\leq2^{-\frac{m-1}{p}}$
and thus
\[
C_{\mathbb{C},m,p}^{\mathrm{pol}}\geq\frac{1}{2^{-\frac{m-1}{p}}}%
=2^{\frac{m-1}{p}}.
\]

\end{proof}

\section{Comparing the estimates}

The estimates of Theorem \ref{777} seems to become better when $\epsilon$
grows (this seems to be a clear sign that we should avoid the terms $z_{1}%
^{2}$ and $z_{2}^{2}$ in our approach). Making $\epsilon\rightarrow\infty$ in
Theorem \ref{777} we obtain%
\[
C_{\mathbb{C},m,p}^{\mathrm{pol}}\geq\left\{
\begin{array}
[c]{lcl}%
\displaystyle2^{\frac{m-2}{p}} &  & \text{for }m\text{ even};\vspace{0.2cm}\\
\displaystyle2^{\frac{m-3}{p}} &  & \text{for }m\text{ odd},
\end{array}
\right.
\]
which are slightly worse than the estimates from Theorem \ref{main}.

\section{The case $m<p<2m$}

For the case $m<p<2m$, there is also a version of the polynomial
Hardy--Littlewood inequalities (see \cite{dimant}): there exists a constant
$C_{\mathbb{K},m,p}^{\mathrm{pol}}\geq1$ such that, for all positive integers
$n$ and all continuous $m$--homogeneous polynomial $P:\ell_{p}\rightarrow
\mathbb{K}$ given by $P(x_{1},...,x_{n})=\sum_{|\alpha|=m}a_{\alpha
}\mathbf{{x}^{\alpha}}$ we have%
\begin{equation}
\left(  \sum_{\left\vert \alpha\right\vert =m}^{n}\left\vert a_{\alpha
}\right\vert ^{\frac{p}{p-m}}\right)  ^{\frac{p-m}{p}}\leq C_{\mathbb{K}%
,m,p}^{\mathrm{pol}}\left\Vert T\right\Vert \label{ohl}%
\end{equation}
and the exponent $\frac{p}{p-m}$ is optimal. Using a polarization argument
(as, for instance in
\cite{camposjimenezrodriguezmunozfernandezpellegrinoseoanesepulveda2014}), but
this procedure is essencially folklore, we have:

\begin{proposition}
\label{pro:first_approach} If $P$ is a homogeneous polynomial of degree $m$ on
$\ell_{p}^{n}$ with $m<p<2m$ given by $P(x_{1},\ldots,x_{n})=\sum_{|\alpha
|=m}a_{\alpha}\mathbf{{x}^{\alpha}}$, then
\[
\left(  {\sum\limits_{\left\vert \alpha\right\vert =m}}\left\vert a_{\alpha
}\right\vert ^{\frac{p}{p-m}}\right)  ^{\frac{p-2}{p}}\leq C_{\mathbb{K}%
,m,p}^{\mathrm{pol}}\left\Vert P\right\Vert
\]
with
\[
C_{\mathbb{K},m,p}^{\mathrm{pol}}\leq C_{\mathbb{K},m,p}^{\mathrm{mult}}%
\frac{m^{m}}{\left(  m!\right)  ^{\frac{p-m}{p}}},
\]
where $C_{\mathbb{K},m,p}^{\mathrm{mult}}$ are the constants of the
multilinear Hardy-Littlewood inequality.
\end{proposition}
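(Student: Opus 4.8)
The plan is to deduce the polynomial inequality from the multilinear Hardy--Littlewood inequality by a polarization passage, and to track the constants carefully enough to land on the factor $m^{m}/(m!)^{\frac{p-m}{p}}$. First I would associate to the $m$--homogeneous polynomial $P$ its unique symmetric $m$--linear form $T\colon(\ell_{p}^{n})^{m}\to\mathbb{K}$ satisfying $P(\mathbf{x})=T(\mathbf{x},\ldots,\mathbf{x})$, and record the two standard ingredients: the polarization estimate $\|T\|\leq\frac{m^{m}}{m!}\|P\|$, valid on any Banach space, and the multilinear Hardy--Littlewood inequality for $T$ in the range $m<p<2m$, namely $\bigl(\sum_{i_{1},\ldots,i_{m}=1}^{n}|c_{i_{1}\ldots i_{m}}|^{\frac{p}{p-m}}\bigr)^{\frac{p-m}{p}}\leq C_{\mathbb{K},m,p}^{\mathrm{mult}}\|T\|$, where $c_{i_{1}\ldots i_{m}}:=T(e_{i_{1}},\ldots,e_{i_{m}})$.

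Next I would make explicit the dictionary between the coefficients of $P$ and those of $T$. Since $T$ is symmetric, $c_{i_{1}\ldots i_{m}}$ depends only on the multi-index $\alpha$ recording the multiplicities of the indices $i_{1},\ldots,i_{m}$; call this common value $c_{\alpha}$. Expanding $T(\mathbf{x},\ldots,\mathbf{x})$ and collecting monomials gives $a_{\alpha}=\binom{m}{\alpha}c_{\alpha}$, where $\binom{m}{\alpha}=\frac{m!}{\alpha!}$ counts the tuples $(i_{1},\ldots,i_{m})$ producing $\mathbf{x}^{\alpha}$. In particular the multilinear sum regroups over $\alpha$ as $\sum_{i_{1},\ldots,i_{m}}|c_{i_{1}\ldots i_{m}}|^{\frac{p}{p-m}}=\sum_{|\alpha|=m}\binom{m}{\alpha}|c_{\alpha}|^{\frac{p}{p-m}}$.

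Writing $q=\frac{p}{p-m}$ and using $|a_{\alpha}|=\binom{m}{\alpha}|c_{\alpha}|$, the key algebraic step is
\[
\sum_{|\alpha|=m}|a_{\alpha}|^{q}=\sum_{|\alpha|=m}\binom{m}{\alpha}^{q-1}\binom{m}{\alpha}|c_{\alpha}|^{q}\leq\Bigl(\max_{|\alpha|=m}\binom{m}{\alpha}\Bigr)^{q-1}\sum_{i_{1},\ldots,i_{m}}|c_{i_{1}\ldots i_{m}}|^{q}.
\]
Here I would invoke the elementary bound $\binom{m}{\alpha}\leq m!$ for every $\alpha$ with $|\alpha|=m$ (equality when the $m$ indices are distinct). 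Since $\frac{q-1}{q}=\frac{m}{p}$, raising to the power $1/q$ and feeding in the multilinear inequality and then the polarization estimate yields
\[
\Bigl(\sum_{|\alpha|=m}|a_{\alpha}|^{q}\Bigr)^{1/q}\leq(m!)^{\frac{m}{p}}\,C_{\mathbb{K},m,p}^{\mathrm{mult}}\,\|T\|\leq(m!)^{\frac{m}{p}}\,\frac{m^{m}}{m!}\,C_{\mathbb{K},m,p}^{\mathrm{mult}}\,\|P\|=\frac{m^{m}}{(m!)^{\frac{p-m}{p}}}\,C_{\mathbb{K},m,p}^{\mathrm{mult}}\,\|P\|,
\]
which is precisely the asserted bound on $C_{\mathbb{K},m,p}^{\mathrm{pol}}$ (the natural outer exponent being $\frac{p-m}{p}=\frac{1}{q}$, in agreement with \eqref{ohl}).

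I do not expect a genuine obstacle: this is the classical polarization reduction, and once the coefficient dictionary $a_{\alpha}=\binom{m}{\alpha}c_{\alpha}$ is in place everything is routine. The only point deserving a little care is the bookkeeping of exponents, namely arranging that the power $(m!)^{\frac{m}{p}}$ coming from the maximal multinomial coefficient combines with the polarization factor $m^{m}/m!$ to give exactly $m^{m}/(m!)^{\frac{p-m}{p}}$; this is why the authors describe the procedure as essentially folklore.
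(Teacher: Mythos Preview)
Your proposal is correct and is precisely the standard polarization argument the paper invokes (the paper does not spell out a proof, calling the procedure ``essentially folklore'' and pointing to \cite{camposjimenezrodriguezmunozfernandezpellegrinoseoanesepulveda2014}); you have filled in exactly those details, including the coefficient identity $a_{\alpha}=\binom{m}{\alpha}c_{\alpha}$, the bound $\binom{m}{\alpha}\leq m!$, and the exponent bookkeeping that collapses $(m!)^{m/p}\cdot m^{m}/m!$ to $m^{m}/(m!)^{(p-m)/p}$. Your remark that the natural outer exponent is $\frac{p-m}{p}$ (as in \eqref{ohl}) rather than the $\frac{p-2}{p}$ appearing in the statement is also correct---that is a typo in the proposition.
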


With the same argument used in the proof of Theorem \ref{777} we obtain
similar estimates for the case $m<p<2m,$ i.e.,
\[
C_{\mathbb{C},m,p}^{\mathrm{pol}}\geq\left\{
\begin{array}
[c]{lcl}%
\displaystyle2^{\frac{m}{p}} &  & \text{for }m\text{ even};\vspace{0.2cm}\\
\displaystyle2^{\frac{m-1}{p}} &  & \text{for }m\text{ odd.}%
\end{array}
\right.
\]

\

\end{document}